\newtheorem{lemma}{Lemma}
\newtheorem{theorem}{Theorem}
\newtheorem{proposition}{Proposition}
\newtheorem{corollary}{Corollary}
\newtheorem{remark}{Remark}
\newtheorem{question}{Question}
\newcommand{\dZ}{\mathbb {Z}}
\newcommand{\dR}{\mathbb {R}}
\newcommand{\tls}{{\rm t}_{\textsc{ls}}}
\newcommand{\tmls}{{\rm t}_{\textsc{mls}}}
\newcommand{\tp}{{\rm t}_{\textsc{rel}}}
\newcommand{\tmix}{{\rm t}_{\textsc{mix}}}
\newcommand{\EE}{{\mathbf{E}}}
\newcommand{\Var}{{\mathrm{Var}}}
\newcommand{\Ent}{{\mathrm{Ent}}}
\newcommand{\cE}{\mathcal{E}}
\newcommand{\cH}{\mathcal{H}}
\newcommand{\cX}{\mathcal {X}}
\title{Upgrading  MLSI to  LSI for reversible Markov chains}
\author{Justin Salez, Konstantin Tikhomirov and Pierre Youssef}
\begin{document}
\maketitle
\begin{abstract}
For  reversible Markov chains on  finite state spaces, we show that the modified log-Sobolev inequality (MLSI) can  be upgraded to a log-Sobolev inequality (LSI) at the surprisingly low  cost of degrading the associated constant by  $\log (1/p)$, where $p$ is the minimum non-zero transition probability.  We illustrate this by providing the first log-Sobolev estimate for Zero-Range processes  on arbitrary graphs. As another application, we determine the modified log-Sobolev constant of the Lamplighter chain on all bounded-degree graphs, and use it to provide negative answers to two open questions by Montenegro and Tetali (2006) and Hermon and Peres (2018).  Our proof builds upon the `regularization trick' recently introduced by the last two authors.
\end{abstract}
\tableofcontents
\section{Introduction}
Functional inequalities constitute a powerful set of tools for the study of the \emph{concentration of measure} phenomenon \cite{MR1767995,MR1849347}. On a conceptual level, those inequalities relate certain statistics of a measure (such as the variance and entropy) to the Dirichlet form of a Markov process that preserves the measure. Besides concentration, those inequalities are intimately connected to the rate of convergence to equilibrium of the considered Markov process. Perhaps the most popular functional inequalities are the \emph{Poincar\'e Inequality} (PI) and the  
\emph{log-Sobolev Inequality} (LSI), which were initially studied in the continuous setting.
On discrete spaces, a variant called the \emph{Modified log-Sobolev Inequality} (MLSI) has been put forward and exploited due to its connection with the entropic exponential ergodicity of the underlying semi-group. 
Informally, it is a standard fact  (see for example \cite{MR2283379}) that
$$
\text{LSI}\; \Rightarrow\;  \text{MLSI}\; \Rightarrow\; \text{PI},
$$
and the aim of this paper is to investigate the reverse implications.

\subsection{Setup}
 
Throughout the paper, we consider an irreducible Markov generator $Q$ on a finite state space $\cX$, and we assume that $Q$ is reversible with respect to a probability measure $\pi$. 
The associated \emph{Dirichlet form} is given, for any observables $f,g \colon\cX\to\dR$, by the formula
\begin{eqnarray*}
\cE\left(f,g\right) & := & \frac{1}{2}\sum_{x,y\in\cX}\pi(x)Q(x,y)\left(f(x)-f(y)\right)\left(g(x)-g(y)\right).
\end{eqnarray*}
The \emph{variance} and \emph{entropy} of a function $f\colon\cX\to(0,\infty)$ are defined as $\Var(f):=\EE[f^2]-\EE^2[f]$ and $
\Ent(f) := \EE[f\log f]-\EE[f]\log\EE[f]$, where  $\EE[\cdot]$ denotes expectation with respect to $\pi$ and where `$\log$' stands for the natural logarithm. The  \emph{log-Sobolev} constant $\tls$, the \emph{modified log-Sobolev} constant $\tmls$, and the \emph{Poincar\'e} constant $\tp$ are then respectively defined as the optimal constants in the functional inequalities
\[
\label{LSI}
\tag{LSI}
\Ent(f) \ \le \ t\cE(\sqrt{f},\sqrt{f});
\]
\[
\label{MLSI}
\tag{MLSI}
\,\Ent(f) \ \le \ t\cE\left(f,\log f\right);
\]
\[
\label{Poincare}
\tag{PI}
\,\Var(f) \ \le \ t\cE\left(f,f\right),
\]
for all functions $f\colon\cX\to(0,\infty)$. We refer the unfamiliar reader to the seminal papers \cite{MR1410112,MR1796718,MR2283379}, the  book \cite{MR2341319}, or the more recent works \cite{MR3773799,MR4243518,MR4372142,MR4414692} for details about those fundamental constants and their many variants. 

It is classical that (\ref{LSI}), (\ref{MLSI}), (\ref{Poincare}) are decreasing in strength, in the exact sense that
\begin{eqnarray}
\label{comparison}
\frac{\tp}{2} \ \leq & \tmls & \leq \ \frac{\tls}{4}. 
\end{eqnarray}
The leftmost inequality is obtained by a standard perturbation argument around constant functions, while the rightmost one is a direct consequence of the inequality
\begin{eqnarray}
\label{LSIvsMLSI}
4\cE\left(\sqrt{f},\sqrt{f}\right) & \le & \cE(f,\log{f}),
\end{eqnarray}
for all $f\colon\cX\to(0,\infty)$, which in turns follows from the easy bound $4(\sqrt{u}-\sqrt{v})^2\le  (u-v)\log \frac uv$ for all $u,v>0$.  However, in contrast with  what happens in the smooth setting of  diffusions on  manifolds, the functional inequality (\ref{LSIvsMLSI}) can here \emph{not} be reversed uniformly in $f$: indeed, as soon as $|\cX|\ge 2$, the quantity $\cE\left(f,\log{f}\right)$ can be made arbitrarily larger than $\cE\left(\sqrt{f},\sqrt{f}\right)$ by simply increasing the value of $f$ at a single point. This degeneracy is  one of the many infamous consequences of the  lack of a \emph{chain rule} for Markov generators on discrete spaces \cite{MR1767995}. It introduces a fundamental discrepancy between the inequalities (\ref{MLSI}) and (\ref{LSI}), which results in two very different meanings for their optimal constants: $\tmls$  measures the \emph{entropy production} along the semi-group, while  $\tls$  quantifies the much stronger   \emph{hyper-contractivity}. As a result, the ratio $\tls/\tmls$ can be arbitrarily large even for two-point chains (see \cite[Section 3]{MR2283379}), and any general  upper bound on it will inevitably have to depend on the data $(Q,\pi)$. Understanding \emph{how} is the task we undertake here. 

To the best of our knowledge, the only  general upper bound available on the ratio $\tls/\tmls$  is the one resulting from the chain of inequalities
\begin{eqnarray}
\label{poor}
\frac{\tls}{\tmls} &  \le & \frac{2\tls}{\tp} \ \le\ \frac{2}{1-2\pi_\star}\log\left(\frac{1}{\pi_\star}-1\right),
\end{eqnarray}
 where $\pi_{\star}:= \min_{x\in \cX} \pi(x)$. The first inequality follows from (\ref{comparison}), and the second from a direct comparison with the Dirichlet form of the trivial chain $Q(x,y)=\pi(y)$, whose log-Sobolev constant is explicit (see, e.g. \cite{MR1410112}). Note that this `extreme' chain, which mixes in a single jump, actually saturates both inequalities in (\ref{poor}).  However, a quick look at more reasonable examples should convince the reader that the  ratio ${\tls}/{\tmls}$ is typically much smaller than  $\log\left({1}/{\pi_\star}\right)$ in practice, and this observation was the motivation behind the present paper.

\subsection{Main result}
Writing $
E  :=  \left\{(x,y)\in\cX^2\colon Q(x,y)>0\right\}
$ for the (symmetric) set of allowed transitions and  $Q(x):=\sum_{y\in\cX\setminus\{x\}}Q(x,y)$ for the total jump rate at $x\in \cX$, we define the \emph{sparsity} parameter
\begin{eqnarray}
\label{def:p}
p & := &  \min_{(x,y)\in E}\frac{Q(x,y)}{Q(x)\vee Q(y,x)} .
\end{eqnarray}In particular,  in the standard \emph{stochastic case} where $Q(x)=1$ for all $x\in\cX$, this reduces to
\begin{eqnarray*}
p & = & \min_{(x,y)\in E}Q(x,y).
\end{eqnarray*}
Our main result is that the lost equivalence between  (\ref{MLSI}) and  (\ref{LSI}) on discrete spaces can be restored  at the surprisingly low  cost $\log(1/p)$ only.
\begin{theorem}[Upgrading MLSI to LSI]\label{th:main}For any reversible Markov generator, we have 
\begin{eqnarray*}
\tls & \le & 20\, \tmls \log\left(\frac{1}{p}\right).
\end{eqnarray*}
\end{theorem}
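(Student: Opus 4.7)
My plan is to exploit the elementary pointwise estimate
$$
(u-v)(\log u - \log v)\ \le\ C\,(\sqrt u - \sqrt v)^{2}\Bigl(1+\log\tfrac{u\vee v}{u\wedge v}\Bigr),\qquad u,v>0,
$$
proved by reducing to a one-variable calculus in $r=u/v$ (the left-hand side of this ratio tends to $4$ as $r\to 1$ and to $\log r$ as $r\to\infty$). Combined with (\ref{MLSI}), this already upgrades MLSI to LSI whenever the test function $f$ is \emph{edge-regular}, in the sense that $f(x)/f(y)\le 1/p$ on every $(x,y)\in E$: indeed one would then have
$$
\Ent(f)\ \le\ \tmls\,\cE(f,\log f)\ \le\ C\,\tmls\log(1/p)\,\cE(\sqrt f,\sqrt f).
$$
The whole task therefore reduces to approximating an arbitrary $f$ by an edge-regular one, without losing a significant fraction of the entropy and without inflating $\cE(\sqrt f,\sqrt f)$ beyond a constant factor.

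\textbf{Regularization.} The natural first attempt is the log-Lipschitz envelope
$$
\widetilde f(x)\ :=\ \min_{y\in\cX}\, f(y)\,p^{-d(x,y)},
$$
with $d$ the graph distance on $(\cX,E)$; this is the largest $g\le f$ obeying $g(x)\le g(y)/p$ along each edge. It is automatically edge-regular with ratio $1/p$, and the bound $\cE(\sqrt{\widetilde f},\sqrt{\widetilde f})\le \cE(\sqrt f,\sqrt f)$ should follow from $\widetilde f\le f$ together with a short pointwise argument (smoothing decreases edge oscillations). Unfortunately, a single application of this envelope is too aggressive — a tall narrow spike of $f$ collapses almost entirely, so the naive comparison $\Ent(\widetilde f)\gtrsim\Ent(f)$ is hopeless in general. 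The remedy I would pursue is a multiscale version: foliate $f$ into dyadic level sets $A_{k}:=\{2^{k}\le f<2^{k+1}\}$, apply the envelope construction slice by slice, and reassemble. The reversibility-derived edge bound $\pi(y)\ge p\,\pi(x)$ for $(x,y)\in E$ (a direct consequence of the definition of $p$) guarantees that mass cannot disappear too quickly between adjacent scales, which is exactly what is needed to retain a constant fraction of each slice's entropy contribution.

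\textbf{Main obstacle.} The delicate point is clearly the entropy accounting across scales: showing that after slicing, regularizing, and summing, one recovers $\Ent(f)$ up to an absolute multiplicative constant, and that the contributions to $\cE(\sqrt f,\sqrt f)$ accumulated along the way telescope cleanly. This is precisely the type of bookkeeping for which the "regularization trick" recently developed by Tikhomirov and Youssef (cited in the abstract) appears to be designed, and I expect it to be the centerpiece of the argument. The explicit constant $20$ in Theorem~\ref{th:main} should emerge as the product of (i) the factor $4$ from (\ref{LSIvsMLSI}), (ii) a logarithmic overhead coming from the pointwise inequality above, and (iii) a modest multiplicative loss from the dyadic summation.
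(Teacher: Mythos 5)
Your first step is sound and matches the paper: the pointwise identity $(u-v)\log\frac{u}{v}=\cH\bigl(\frac{u}{v}\bigr)(\sqrt{u}-\sqrt{v})^2$ with $\cH(\rho)=\frac{\sqrt{\rho}+1}{\sqrt{\rho}-1}\log\rho$ shows that on $r$-regular functions one has $\cE(f,\log f)\le \cH(r)\,\cE(\sqrt f,\sqrt f)$ with $\cH(r)=O(\log r)$, which is exactly the paper's Lemma \ref{lm:r}. The entire difficulty, as you correctly sense, is the reduction to regular test functions, and this is where your plan has a genuine gap. You propose the \emph{lower} envelope $\widetilde f(x)=\min_y f(y)p^{-d(x,y)}$, observe (correctly) that it annihilates the entropy of a tall spike, and then appeal to an unexecuted multiscale repair via dyadic level sets $A_k=\{2^k\le f<2^{k+1}\}$. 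That repair does not go through as described: neither $\Ent(f)$ nor $\cE(\sqrt f,\sqrt f)$ decomposes usefully over dyadic slices of $f$ (there is no co-area formula for these quadratic/entropic functionals), and regularizing each slice separately and ``reassembling'' produces a function whose edge ratios are no longer controlled at the interfaces between slices. So the central obstacle you flag is real, and the proposed remedy is not a proof.

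The paper resolves it differently and more cleanly: it takes the \emph{upper} envelope $f_\star(x)=\max_z r^{-d(x,z)}f(z)$, the smallest $r$-regular function \emph{above} $f$, with $r=4/p^2$ (note: regularity ratio $1/p$, as in your proposal, is too aggressive — one needs $rp>\Delta$ where $\Delta\le 1/p$ is the degree of $(\cX,E)$, so $r\gtrsim 1/p^2$, for the relevant geometric series to converge; this only costs a factor $2$ inside the logarithm). Since $f_\star\ge f$, no entropy is destroyed; instead one must show the \emph{added} mass is harmless, and the paper proves $\EE[f_\star-f]\le \frac{1}{3\log 6}\Ent(f)$ via the variational characterization $\Ent(g)=\max\{\EE[gh]:\EE[e^h]\le 1\}$, using reversibility to get $c(e)\le p^{-d(e,e')}c(e')$ for edge weights $c(x,y)=\pi(x)Q(x,y)$ and summing a geometric series with ratio $(p^2r)^{-1}=1/4$. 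This yields $\Ent(f)\le 2\Ent(f_\star)$. The Dirichlet form comparison also requires an argument (your claim that the envelope decreases $\cE(\sqrt{\cdot},\sqrt{\cdot})$ outright is not justified even for the lower envelope): the paper matches each edge where $f_\star$ oscillates to an edge where $f$ oscillates at least as much up to $r^{-d(e,e')}$, and sums to get a loss factor $\kappa\le 4/3$. Combining $\tls\le 3\tls(r)$, $\tls(r)\le\cH(r)\tmls(r)\le\cH(r)\tmls$ and $3\cH(4/p^2)\le 20\log(1/p)$ gives the theorem. In short: right first half, but the decisive idea — regularize from above and control the surplus $\EE[f_\star-f]$ by the entropy itself — is missing from your proposal, and the dyadic-slicing substitute would not work.
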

The improvement over (\ref{poor}) can be considerable, since $p$ is typically much larger than $\pi_\star$. To appreciate this, consider the important case of simple random walk on a finite graph $G=(V_G,E_G)$: the classical estimate (\ref{poor}) predicts $\tls/\tmls=O(\log |V_G|)$, whereas our result gives $\tls/\tmls=O(\log d)$ where $d$ denotes the maximum degree. As a concrete example, when $G$ is the transposition walk on the symmetric group $\mathfrak S_n$, we obtain $\tls/\tmls=O(\log n)$ instead of $\tls/\tmls=O(n\log n)$. Incidentally, this example demonstrates that our result is sharp except for the value of the universal prefactor, which we did not try to optimize (see, e.g. \cite{DS}). On bounded-degree graphs, Theorem \ref{th:main}  shows that (\ref{MLSI}) and (\ref{LSI}) are actually equivalent, a new fact with surprising consequences (see Section \ref{sec:lamplighter} below).

Beyond the theoretical interest of a universal comparison between  (\ref{MLSI}) and (\ref{LSI}), Theorem \ref{th:main} can be used in practice to derive new, ready-to-use  functional-analytic estimates. Obviously, this can be done in two different directions. First, we can produce lower bounds on $\tmls$ in situations where a lower bound on $\tls$ is available. In Section \ref{sec:lamplighter} below, we illustrate this by determining the modified log-Sobolev constant of the Lamplighter chain on all bounded-degree graphs, and we use this to provide negative answers to two open questions by Montenegro and Tetali  and by Hermon and Peres. Conversely, there are several concrete classes of  chains for which a (\ref{MLSI})  was   established by methods that do not carry over to (\ref{LSI}). Examples include Bernoulli-Laplace models \cite{MR2548501}, Zero-Range processes \cite{MR4332696,MR2548501}, or negatively-dependent measures on the Boolean hypercube \cite{2019arXiv190202775H,MR4203344}. In such examples, using Theorem \ref{th:main} to convert the known upper bound on $\tmls$ into a new one on $\tls$ is of practical interest for at least three reasons:
\begin{enumerate}[(i)]
\item \emph{Mixing times}: writing $f_t$ for the density  with respect to equilibrium of the Markov process at time $t$, the parameter $\tmls$ only controls the decay rate of the relative entropy $\Ent(f_t)$, while $\tls$ controls  the much stronger uniform norm $\left\|f_t-1\right\|_\infty$  (see, e.g., \cite{MR2341319}). 
\item \emph{Isoperimetry}: by specializing (\ref{LSI}) to $\{0,1\}-$valued functions, the constant $\tls$ captures   \emph{small-set expansion}, which has numerous applications (see, e.g.,  \cite{10.1214/22-EJP749}).
\item \emph{Robustness}: unlike $\tmls$, an estimate on $\tls$ can  be transferred to other chains using the classical and well-developed comparison theory for Dirichlet forms (see, e.g., \cite{MR1233621}). 
\end{enumerate}
In Section \ref{sec:ZRP} below, we will illustrate this by providing the very first log-Sobolev estimate  for the Zero-Range Process with increasing rates on arbitrary graphs.
\subsection{Application to Lamplighter chains}\label{sec:lamplighter}
Fix a finite graph $G=(V_G,E_G)$ and imagine that each vertex is equipped with a lamp that can be either \emph{off} or \emph{on}. Now, consider a lamplighter performing a simple random walk on $G$ and randomly switching the lamps off or on on his way. More formally, let us describe the state of the system by a pair $(i,\sigma)$ where $i\in V$ represents the position of the lamplighter and $\sigma\in\{0,1\}^V$ indicates which lamps are on. The \emph{Lamplighter chain} on $G$ is the continuous-time Markov chain on $\cX:=V\times\{0,1\}^V$ whose generator acts on any  $f\colon\cX\to\dR$ as follows:
\begin{eqnarray*}
\label{def:lamplighter}
(Qf)(i,\sigma) & = & \frac{1}{2}\left(f(i,\sigma^i)-f(i,\sigma)\right)+\frac{1}{2\deg(i)}\sum_{j\sim i}\left(f(j,\sigma)-f(i,\sigma)\right).
\end{eqnarray*}
In this formula, the notation $j\sim i$ means that $\{i,j\}\in E_G$, while $\sigma^i$ denotes the configuration obtained from $\sigma\in \{0,1\}^V$ by replacing $\sigma_i$ with $1-\sigma_i$. The behavior of this process is extremely rich and has drawn considerable interest across various fields, including graph theory, group theory, spectral theory, discrete functional analysis and probability. We refer the reader to the book \cite[Chapter 19]{MR3726904} and the references therein for a quick introduction. 

The works \cite{MR1449833,MR2110019,MR2996735,MR3269988,TENT} are all devoted to the fundamental question of relating the  mixing properties of the Lamplighter chain  to those of the underlying graph $G$. In particular, the relaxation time, the total-variation mixing time and the uniform mixing time are now completely understood. This is also true for the log-Sobolev constant, at least on bounded-degree graphs. Specifically, the Lamplighter chain on $G$ was shown in \cite{MR2996735} to satisfy
\begin{eqnarray}
\label{LSI:lamplighter}
\tls & \asymp_d & \frac{|V_G|}{\gamma(G)},
\end{eqnarray}
where $\gamma(G)$ denotes the spectral gap of $G$ and $d=\max_{i\in V_G}\deg(i)$  the maximum degree. Here, the notation $a\asymp_d b$ means that the ratio $a/b$ is bounded from above and below by positive numbers that depend only on $d$. However,  in contrast with many other mixing parameters, nothing seems to be known about the modified log-Sobolev constant of Lamplighter chains, even on simple graphs such as the $n-$cycle $\dZ_n$. Since the Lamplighter chain has sparsity  $p=1/(2d)$,  our main result allows us to determine  $\tmls$ on all bounded-degree graphs.
\begin{corollary}[MLSI for the Lamplighter chain on bounded-degree graphs] We have
\label{co:lamplighter}
\begin{eqnarray*}
\tmls & \asymp_d & \frac{|V_G|}{\gamma(G)}.
\end{eqnarray*}
\end{corollary}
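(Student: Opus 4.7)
The plan is to combine Theorem \ref{th:main} with the known log-Sobolev estimate (\ref{LSI:lamplighter}) and the trivial comparison $\tmls \le \tls/4$ from (\ref{comparison}). First I would identify the sparsity parameter $p$ for the Lamplighter generator. Every state $(i,\sigma)$ has total jump rate $Q(i,\sigma) = \tfrac{1}{2} + \deg(i) \cdot \tfrac{1}{2\deg(i)} = 1$, so we are in the stochastic case, and each nonzero transition rate is either $\tfrac{1}{2}$ (for a lamp flip) or $\tfrac{1}{2\deg(i)}$ (for a random walk step). Consequently
\[
p \;=\; \min_{(x,y)\in E} Q(x,y) \;=\; \frac{1}{2d},
\]
so $\log(1/p) = \log(2d)$ depends only on $d$.

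Plugging this into Theorem \ref{th:main} gives $\tls \le 20 \log(2d)\,\tmls$, i.e.\ $\tmls \ge \tls/(20\log(2d))$. Together with the universal bound $\tmls \le \tls/4$ from (\ref{comparison}), this yields $\tmls \asymp_d \tls$. Finally, invoking the log-Sobolev estimate (\ref{LSI:lamplighter}) of \cite{MR2996735}, namely $\tls \asymp_d |V_G|/\gamma(G)$, one concludes $\tmls \asymp_d |V_G|/\gamma(G)$, which is the claim.

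There is essentially no obstacle left at this stage: the entire content of the corollary is deferred to Theorem \ref{th:main} (proved in later sections) and to the pre-existing bound of \cite{MR2996735}. This is precisely the sort of application Theorem \ref{th:main} is designed for --- upgrading an LSI on a bounded-degree chain to a matching MLSI --- and the Lamplighter chain, for which MLSI had resisted direct analysis but LSI was already sharply understood, is a natural testing ground. The only mildly delicate point is the explicit verification that $p = 1/(2d)$, which one must do uniformly over edges: the inequality $1/(2\deg(i)) \geq 1/(2d)$ is the binding case, while the lamp-flipping transitions trivially satisfy $1/2 \ge 1/(2d)$.
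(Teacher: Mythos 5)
Your proposal is correct and follows exactly the argument the paper intends: identify $p=1/(2d)$, apply Theorem \ref{th:main} together with the universal bound $\tmls\le\tls/4$ from (\ref{comparison}) to get $\tmls\asymp_d\tls$, and then invoke the known estimate (\ref{LSI:lamplighter}). No gaps.
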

On the discrete torus $G=\dZ^d_n$, we thus obtain $\tmls=\Theta(n^{d+2})$  for any fixed dimension $d\ge 1$. Surprisingly, this is always much larger than the {total-variation mixing time} of the Lamplighter chain, which is known \cite{MR1449833,MR2110019} to be $\tmix=\Theta(n^{2})$ when $d=1$, $\tmix=\Theta(n^{2}\log^2 n)$ when $d=2$, and $\tmix=\Theta(n^{d}\log n)$ for any fixed $d\ge 3$. Thus, Lamplighter chains on discrete tori constitute a simple  family of counter-examples to the following classical question, which appears as Question 8.2 in the classical monograph \cite{MR2341319} by Montenegro and Tetali.
\begin{question}[Question 8.2 in \cite{MR2341319}]Is there a universal constant $c>0$ such that 
$
\tmls  \le  c\,\tmix,
$
for all irreducible Markov chains?
\end{question}
Very recently,  Hermon and Peres \cite[Question 7.1]{MR3773799} proposed the following more reasonable conjecture, in which  $\tmix$ is replaced by the larger {relative-entropy mixing time} ${\rm t}_{\textsc{ent}}$ (see \cite{TENT} or \cite{MR3773799} for the precise definition).
\begin{question}[Question 7.1 in \cite{MR3773799}]Is there a universal constant $c>0$ such that 
$
\tmls  \le  c\,{\rm t}_{\textsc{ent}},
$
for all irreducible Markov chains?
\end{question}
Since the Lamplighter chain on the $n-$cycle $\dZ_n$ is known to satisfy ${\rm t}_\textsc{ent} =\Theta(n^2\log n)$  \cite{TENT},  the estimate $\tmls=\Theta(n^3)$ given by Corollary \ref{co:lamplighter} again provides a negative answer to this  question. Finally, we note that our example also refutes Question 7.2 in the same paper, which asks for an even stronger upper bound on $\tmls$.

\subsection{Application to Zero-Range Processes}
\label{sec:ZRP}
Introduced by Spitzer \cite{Spitzer}, the \emph{Zero-Range Process} (\textsc{ZRP}) is a  generic interacting particle system  in which individual jumps occur at a rate that only depends on the current number of particles present at the source. The model is parameterized by the following ingredients:
\begin{itemize}
\item two  integers $m,n\ge 1$ representing the number of particles and sites, respectively;
\item a symmetric stochastic matrix $G=(G_{ij})_{1\le i,j\le n}$  specifying the geometry;
\item a function $r_i\colon\{1,2,\ldots\}\to(0,\infty)$ encoding the kinetics at each site $i\in [n]$.
\end{itemize}
The \textsc{ZRP} with these parameters is a continuous-time Markov chain on the state space 
\begin{eqnarray}
\cX & := & \left\{x=(x_1,\ldots,x_n)\in\dZ_+^n\colon \sum_{i=1}^n x_i=m\right\},
\end{eqnarray}
where $x_i$ represents the number of particles at site $i$. The action of the generator is given by 
\begin{eqnarray}
\label{def:markov}
(Q f)(x) & := & \sum_{1\le i,j\le n} r_i(x_i)G_{ij}\left(f(x+\delta_j-\delta_i)-f(x)\right),
\end{eqnarray}
where $(\delta_1,\ldots,\delta_n)$ denotes the canonical $n-$dimensional basis, and with the convention that $r_i(0)=0$ for all $i\in [n]$ (no jumps from empty sites). In words, a site $i$ with $k\ge 1$ particles expels a particle at rate $r_i(k)$, and the destination is chosen according to  $G$. A natural choice for the latter is the transition matrix of simple random walk on a regular graph. In fact, the model is already interesting on the complete graph (the so-called \emph{mean-field} case):
\begin{eqnarray}
\label{def:MF}
\forall (i,j)\in[n]^2,\qquad G_{ij} &  = & \frac{1}{n}.
 \end{eqnarray}
Obtaining quantitative estimates on the convergence to equilibrium of the ZRP has been and continues to be a subject of active research, see e.g., \cite{Cap,MR4089493} in the mean-field case and \cite{MR2184099,MR2147317} on integer lattices. A standard assumption on the rate functions $(r_i)_{1\le i\le n}$ is that their increments all lie in a fixed  compact subset of $(0,\infty)$:
\begin{eqnarray}
\label{assume:rates}
\delta \  \le & r_i(k+1)-r_i(k) & \le  \ \Delta,
\end{eqnarray}
for some $\delta,\Delta>0$ and every $i\in [n]$ and  $k\in\dZ_+$.
Under this \emph{weak interaction} condition, and in the \emph{mean-field} setting (\ref{def:MF}), the ZRP was shown in \cite{MR4332696} to satisfy  the dimension-free MLSI
\begin{eqnarray*}
\tmls & \le & \frac{2\Delta}{\delta^2}.
\end{eqnarray*}
Note that our sparsity parameter $p$ here satisfies $p\ge \frac{\delta}{\Delta mn}$, because $Q(x)\le \sum_{i=1}^nr_i(x_i)\le \Delta m$ for all $x\in\cX$ and $Q(x,y) \ge \delta/n$ for all $(x,y)\in E$.  
Thus,  Theorem \ref{th:main} produces the following estimate which, to the best of our knowledge, is the very first LSI for the mean-field ZRP.
\begin{corollary}[LSI for mean-field ZRP]Under assumptions (\ref{def:MF}) and (\ref{assume:rates}), we have
\begin{eqnarray*}
\label{MFZRP:LSI}
\tls & \le & \frac{40\Delta}{\delta^2}\log\left(\frac{\Delta mn}{\delta}\right),
\end{eqnarray*}
for any choice of the dimension parameters $n,m$.
\end{corollary}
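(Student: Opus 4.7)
My plan is to assemble the corollary from three off-the-shelf ingredients, all already present in the paper. First, I would apply Theorem~\ref{th:main}, which guarantees $\tls \le 20\,\tmls\log(1/p)$ for every reversible Markov generator, to the mean-field ZRP. Second, I would invoke the dimension-free MLSI of \cite{MR4332696} for this particular chain, namely $\tmls \le 2\Delta/\delta^2$, which is valid under the weak-interaction assumption (\ref{assume:rates}) together with the mean-field geometry (\ref{def:MF}). Third, I would substitute the sparsity estimate $p \ge \delta/(\Delta mn)$ claimed in the discussion immediately preceding the statement.

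The only task left is to justify the sparsity estimate, which is essentially read off from the dynamics. For any allowed transition $x \to y = x + \delta_j - \delta_i$, the mean-field form (\ref{def:MF}) gives $Q(x,y) = r_i(x_i)/n$, and the lower-increment part of (\ref{assume:rates}) combined with $r_i(0)=0$ yields $r_i(x_i) \ge \delta$ whenever $x_i \ge 1$, so $Q(x,y) \ge \delta/n$. For the total exit rate, the upper-increment part of (\ref{assume:rates}) gives $Q(x) \le \sum_{i=1}^n r_i(x_i) \le \Delta\sum_i x_i = \Delta m$. Hence $p \ge (\delta/n)/(\Delta m) = \delta/(\Delta mn)$, so $\log(1/p) \le \log(\Delta mn/\delta)$.

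Combining these three estimates gives
\[
\tls \ \le \ 20\,\tmls\,\log(1/p) \ \le \ 20 \cdot \frac{2\Delta}{\delta^2} \cdot \log\!\left(\frac{\Delta mn}{\delta}\right) \ = \ \frac{40\Delta}{\delta^2}\log\!\left(\frac{\Delta mn}{\delta}\right),
\]
which is the desired bound. I do not foresee any real obstacle in this argument: all the nontrivial work is upstream, concentrated in Theorem~\ref{th:main} (the engine that upgrades MLSI to LSI) and in the MLSI of \cite{MR4332696}. The only price paid by this reduction is the logarithmic factor $\log(\Delta mn/\delta)$, which is exactly what Theorem~\ref{th:main} promises and is known to be unavoidable in this level of generality.
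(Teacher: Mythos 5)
Your proposal is correct and follows exactly the paper's own route: the authors likewise combine the MLSI bound $\tmls\le 2\Delta/\delta^2$ from \cite{MR4332696} with the sparsity estimate $p\ge\delta/(\Delta mn)$ (justified via $Q(x,y)\ge\delta/n$ and $Q(x)\le\sum_i r_i(x_i)\le\Delta m$, which also controls the $Q(y,x)$ term in the definition of $p$) and then apply Theorem~\ref{th:main}. Your spelled-out verification of the sparsity bound is a faithful elaboration of the one-line justification given in the paper.
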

The dependency in $n$ is optimal, as can be seen by investigating the case of a single particle ($m=1$). As already mentioned, one of the advantages of $\tls$ over $\tmls$ is its robustness under comparison of Dirichlet forms. This is particularly true in the present setting, because it was shown  in \cite{MR3984254} that replacing  a general symmetric stochastic matrix $G$ by its mean-field version (\ref{def:MF}) can not increase the Dirichlet form of the ZRP by more than a factor $1/\gamma(G)$, where $\gamma(G)$ denotes the  spectral gap of  $G$. Consequently, our mean-field LSI estimate   can be directly transferred to arbitrary geometries, yielding the following result. 
To the best of our knowledge, log-Sobolev estimates  for the ZRP were so far restricted to lattices \cite{MR2184099}.  
\begin{corollary}[LSI for ZRP on arbitrary geometries]\label{co:ZRP}Under Assumption (\ref{assume:rates}), we have
\begin{eqnarray*}
\tls & \le & \frac{40\Delta}{\gamma(G)\delta^2}\log\left(\frac{\Delta m n}{\delta}\right),
\end{eqnarray*}
for any choice of the dimension parameters $n,m$ and of the symmetric stochastic matrix $G$. 
\end{corollary}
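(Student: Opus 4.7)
The plan is to derive Corollary \ref{co:ZRP} as an immediate consequence of the preceding mean-field LSI, by transferring it from the uniform geometry (\ref{def:MF}) to an arbitrary symmetric stochastic matrix $G$ through the Dirichlet-form comparison established in \cite{MR3984254}.

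First I would observe that for every symmetric stochastic $G$, the ZRP with rates $(r_i)$ is reversible with respect to the same product-like measure $\pi$ on $\cX$, namely $\pi(x)\propto\prod_{i=1}^n\prod_{k=1}^{x_i}r_i(k)^{-1}$. In particular, $\pi$ does not depend on $G$, so the ZRP with arbitrary geometry $G$ and its mean-field counterpart share the same entropy functional and differ only through their Dirichlet forms, which I denote by $\cE$ and $\overline{\cE}$ respectively.

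Next I would invoke the comparison from \cite{MR3984254} recalled in the excerpt: for every observable $h\colon\cX\to\dR$,
\begin{eqnarray*}
\overline{\cE}(h,h) & \le & \frac{1}{\gamma(G)}\,\cE(h,h).
\end{eqnarray*}
Specializing this to $h=\sqrt{f}$ for arbitrary $f\colon\cX\to(0,\infty)$ and inserting the result into the mean-field LSI
\begin{eqnarray*}
\Ent(f) & \le & \frac{40\Delta}{\delta^2}\log\left(\frac{\Delta mn}{\delta}\right)\overline{\cE}(\sqrt{f},\sqrt{f})
\end{eqnarray*}
of the previous corollary immediately delivers the claimed upper bound on $\tls$. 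No serious obstacle is anticipated: the proof reduces to this two-step combination, and the only subtlety worth checking is the $G$-independence of the reversible measure $\pi$, which is a classical feature of zero-range dynamics whenever $G$ is symmetric.
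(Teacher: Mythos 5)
Your proposal is correct and follows exactly the route the paper itself takes: the corollary is obtained by combining the mean-field LSI with the Dirichlet-form comparison $\overline{\cE}(h,h)\le \gamma(G)^{-1}\cE(h,h)$ from \cite{MR3984254}, using that the reversible measure $\pi$ is independent of $G$. Nothing is missing.
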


\begin{remark}[Asymmetric geometries]Interestingly, the symmetry of  $G$ is  never actually used in \cite{MR4332696,MR3984254}, so Corollary \ref{co:ZRP} generalizes to any  stochastic matrix $G$ as follows:
\begin{eqnarray*}
\tls & \le & \frac{40\Delta}{\gamma(G)\delta^2}\log\left(\frac{\Delta m }{\delta p_\star}\right),
\end{eqnarray*}
where $p_\star$ denotes the smallest entry of the invariant probability vector of $G$, and where $\gamma(G)$  denotes the spectral gap of the additive reversibilization of $G$.
\end{remark}
\section*{Acknowledgement.} The three authors warmly thank Jonathan Hermon for pointing out the open problem \cite[Question 7.1]{MR3773799}. J.S. was partially supported by Institut Universitaire de France, and K.T. was partially supported by the Sloan Research Fellowship and by the NSF grant DMS 2054666. 
\section{Proof of the main result}

The proof of Theorem \ref{th:main} builds upon a simple but fruitful idea, recently introduced by the last two authors in \cite{TY}: the \emph{regularization trick}. It consists in restricting functional inequalities to observables $f\colon\cX\to(0,\infty)$ that are  \emph{smooth}, in  an appropriate sense.
\subsection{The regularization trick}
Fix a parameter $r\ge 1$. Following \cite{TY}, we say that a function $f\colon\cX\to(0,\infty)$ is 
$r-$\emph{regular} if 
\begin{eqnarray}
\label{assume:reg}
\forall (x,y)\in E, \qquad f(x) & \le & rf(y).
\end{eqnarray}
Our starting point is the elementary but new observation that, once restricted to $r-$regular functions, the basic  inequality (\ref{LSIvsMLSI}) can be reversed at the optimal cost
\begin{eqnarray}
\label{def:H}
\cH(r) & := & \frac{\sqrt{r}+1}{\sqrt{r}-1}\log r.
\end{eqnarray}
Note that $\cH(\infty)=\infty$, in agreement with our earlier observation that the unrestricted inequality (\ref{LSIvsMLSI}) can \emph{not} be reversed. At the other extreme, we have $\cH(1+)=4$, in agreement with the fact that (\ref{LSIvsMLSI}) is an equality in the infinitely smooth setting of diffusions on manifolds. 

\begin{lemma}[Exploiting regularity]\label{lm:r} If $f\colon\cX\to(0,\infty)$ is $r-$regular, then 
\begin{eqnarray*}
 \cE(f,\log f) & \le & \cH(r)\cE(\sqrt{f},\sqrt{f}).
\end{eqnarray*}
\end{lemma}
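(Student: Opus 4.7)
My plan is to derive the inequality from a pointwise comparison along each edge $(x,y) \in E$. Expanding both Dirichlet forms, they differ only in their edge-wise contributions $(u-v)(\log u - \log v)$ versus $(\sqrt u - \sqrt v)^2$, where $u=f(x)$ and $v=f(y)$; both are weighted by the same non-negative factor $\pi(x)Q(x,y)$. So it suffices to establish, for every $u,v > 0$ with $1/r \le u/v \le r$ (which is what $r$-regularity guarantees along any edge of $E$), the scalar bound
\[
(u-v)(\log u - \log v) \ \le \ \mathcal H(r)\bigl(\sqrt u - \sqrt v\bigr)^2.
\]

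Next, I would homogenize by dividing through by $v$ and introducing $s := \sqrt{u/v} \in [1/\sqrt r,\sqrt r]$. A short manipulation recasts the desired inequality in the one-variable form
\[
g(s) \ := \ \frac{2(s+1)\log s}{s-1} \ \le \ \mathcal H(r),
\]
where the case $s=1$ is included by continuity, and the regime $s<1$ reduces to $s>1$ via the automatic symmetry $g(s)=g(1/s)$. A direct evaluation yields $g(\sqrt r)=\mathcal H(r)$, which both explains the definition of $\mathcal H$ and shows that the tightness of the constant is built in. Consequently, the problem reduces to proving that $g$ is non-decreasing on $[1,\infty)$.

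The only nontrivial step is this monotonicity statement, and it is where I expect the (mild) calculation to be concentrated. Differentiating $g$ and clearing denominators, the sign of $g'(s)$ on $(1,\infty)$ matches that of $\phi(s):=s - 1/s - 2\log s$. Since $\phi(1)=0$ and $\phi'(s)=(1-1/s)^2 \ge 0$, the function $\phi$ is non-negative on $[1,\infty)$, hence $g'\ge 0$ there, which closes the argument. I do not anticipate any serious obstacle beyond this calculus lemma; the main conceptual content of the statement is the identification of the optimal constant $\mathcal H(r)$, which the reduction above makes transparent.
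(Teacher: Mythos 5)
Your proof is correct and follows essentially the same route as the paper's: both reduce to the edgewise scalar inequality via the identity $(u-v)\log(u/v)=\cH(u/v)(\sqrt{u}-\sqrt{v})^2$ (your $g(s)$ is exactly $\cH(s^2)$), and both rely on the symmetry $\cH(u)=\cH(1/u)$ together with monotonicity of $\cH$ on $[1,\infty)$. The only difference is that you supply the calculus verification of that monotonicity, which the paper asserts without proof.
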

\begin{proof}
With  $\cH(1):=4$, the formula (\ref{def:H}) defines a  function $\cH\colon (0,\infty)\to (0,\infty)$  which increases on $[1,\infty)$ and satisfies  $\cH(u)=\cH(u^{-1})$ for all $u\in(0,\infty)$. It follows that $\cH(u) \le \cH(r)$ for all $u\in[r^{-1},r]$. On the other hand, elementary manipulations give
\begin{eqnarray*}
(u-v)\log\left(\frac{u}{v}\right) & = & \cH\left(\frac{u}{v}\right)(\sqrt{u}-\sqrt{v})^2,
\end{eqnarray*}
for all $u,v>0$. Taking $u=f(x)$ and $v=f(y)$, and recalling our assumption (\ref{assume:reg}), we  obtain
\begin{eqnarray*}
(f(x)-f(y))\log \left(\frac{f(x)}{f(y)}\right) & \le & \cH\left(r\right)(\sqrt{f(x)}-\sqrt{f(y)})^2,
\end{eqnarray*}
for any $(x,y)\in E$. To conclude, we multiply by $\pi(x)Q(x,y)$  and sum over all $x,y\in\cX$. 
\end{proof}
Let us respectively denote by $\tls(r)$ and $\tmls(r)$ the optimal constants in the inequalities (\ref{LSI}) and (\ref{MLSI}), when restricted to $r-$regular functions. The above lemma readily implies
\begin{eqnarray}
 \tls(r) & \le & \cH(r)\tmls(r),
\end{eqnarray}
which can be seen as a regularized version of Theorem \ref{th:main}. To conclude,  we now need to relate  the constants $\tmls(r)$ and $\tls(r)$ to their unregularized versions $\tmls$ and $\tls$. Of course,  we trivially have
$
\tmls(r)  \le  \tmls$
and $\tls(r)  \le  \tls$, by definition. 
We will now show that those inequalities can be reversed, provided $r$ is  large enough. Specifically, we henceforth set
\begin{eqnarray}
\label{def:r}
r & := & \frac{4}{p^{2}},
\end{eqnarray} 
and we assume that $p\le 1/2$ (if this is not the case, then irreducibility and reversibility imply that $|\cX|\le 2$, so that Theorem \ref{th:main} can be checked by hands). Note that we then have   $3\cH(r)\le {20}\log \frac{1}{p}$. Thus, Theorem \ref{th:main} is a consequence of the following crucial result.
\begin{theorem}[Regularization]\label{th:reg}With $r$ as in (\ref{def:r}), we have
$
\tls  \le 3\tls(r)$  and $\tmls  \le  3\tmls(r)$.

\end{theorem}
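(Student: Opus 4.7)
The plan is to prove both estimates by a unified strategy: given any $f:\cX\to(0,\infty)$, construct an $r$-regular surrogate $g$ satisfying the termwise comparisons
\[
\Ent(f) \ \le\ 3\,\Ent(g) \quad\text{and}\quad \cE(g,\log g) \ \le\ \cE(f,\log f),
\]
together with the analogous LSI pair with $\cE(\sqrt{\cdot},\sqrt{\cdot})$ in place of $\cE(\cdot,\log\cdot)$. Feeding such a $g$ into the corresponding restricted inequality then immediately yields both $\tls \le 3\tls(r)$ and $\tmls \le 3\tmls(r)$.

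The construction of $g$ is the heart of the matter and must exploit the sparsity parameter $p$, since the target regularity level $r = 4/p^2$ is tied to it. One natural candidate is a short lazy step of the jump kernel $K(x,y) := Q(x,y)/Q(x)$: setting $g := \alpha f + (1-\alpha) Kf$ for a suitable $\alpha\in(0,1)$ and invoking the sparsity bound $K(x,y)\ge p$ on every edge should produce, after a careful two-sided edge estimate, a function $g$ whose extremal edge ratios are bounded by $r = 4/p^2$. An alternative construction is a greedy capping: iteratively flatten the largest local ratios of $f$ until the resulting function is $r$-regular, producing either the smallest $r$-regular majorant or the largest $r$-regular minorant of $f$. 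In either case, the Dirichlet-form comparison is the easier direction, following from the standard contractive properties of reversible Markov kernels acting on convex Dirichlet functionals.

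The main obstacle will be the entropy comparison $\Ent(f) \le 3\,\Ent(g)$. Flattening $f$ into an $r$-regular function generically loses entropy, and the loss can be arbitrarily large for fixed $r$. The argument must therefore implement a trade-off: wherever the regularization substantially shrinks $f$, the original function $f$ must already exhibit strong local contrasts, and such contrasts force $\cE(f,\log f)$ (respectively $\cE(\sqrt{f},\sqrt{f})$) to dominate the erased entropy. Running this dichotomy with the clean universal constant $3$ -- ensuring that no more than two-thirds of the entropy can be washed out by the regularization before the right-hand side absorbs the remainder -- is the delicate step where the regularization trick of \cite{TY} must plug into the specific algebraic structure of reversible Markov generators.
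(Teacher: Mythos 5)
Your high-level skeleton (construct an $r$-regular surrogate $g$, compare entropies and Dirichlet forms termwise, then feed $g$ into the restricted inequality) is exactly the paper's, but each of the three concrete steps you sketch either fails or is left unproved. First, the surrogate $g=\alpha f+(1-\alpha)Kf$ is not $r$-regular for any finite $r$: across an edge $(x,y)$, the value $g(y)$ contains contributions $K(y,z)f(z)$ from points $z$ at distance $2$ from $x$ which do not appear in $g(x)$, so the ratio $g(y)/g(x)$ is unbounded (take a path $x\!-\!y\!-\!z$ and let $f(z)\to\infty$). The construction that works is the one you mention only in passing: the smallest $r$-regular majorant $f_\star(x)=\max_z r^{-d(x,z)}f(z)$, whose definition must reach out to \emph{all} distances with geometric discounting. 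Second, for this $f_\star$ the Dirichlet-form comparison is not ``the easier direction following from standard contractivity'': no Markov operator is being applied to $f$, and the paper must match each edge $e$ on which $\nabla\sqrt{f_\star}$ is positive with an edge $e'$ on which $\nabla\sqrt f$ is at least as large up to the factor $r^{-d(e,e')}$, then sum a geometric series using reversibility in the form $c(e)\le p^{-d(e,e')}c(e')$ together with $\Delta\le 1/p$; this is precisely where the choice $r=4/p^2$ earns its keep, and it yields a factor $4/3$, not $1$.

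Third, and most seriously, your plan for the entropy comparison does not close. Absorbing the ``erased entropy'' into $\cE(f,\log f)$ can only produce an inequality of the form $\Ent(f)\le 3\Ent(g)+b\,\cE(f,\log f)$ with $b$ an absolute constant; chaining this with $\Ent(g)\le\tmls(r)\,\cE(g,\log g)$ gives $\tmls\le 3\,\tmls(r)+b$, an additive bound that is not the claimed multiplicative one, and $b$ cannot be compared to $\tmls(r)$ without further information. The paper instead proves a \emph{pure} entropy comparison $\Ent(f)\le 2\Ent(f_\star)$ involving no Dirichlet form at all. The mechanism is the variational characterization $\Ent(g)=\max\{\EE[gh]\colon\EE[e^h]\le 1\}$ applied twice: once to get $5\Ent(f)\le 6\Ent(f_\star)+(6\log 6)\EE[f_\star-f]$ (using only $f_\star\ge f$), and once to show $(3\log 6)\EE[f_\star-f]\le\Ent(f)$, i.e.\ that the total mass added by the regularization is itself controlled by the entropy of $f$; the latter uses the idempotent ``transport'' map $T$ sending $x$ to the maximizer defining $f_\star(x)$ and the same geometric-series estimate $\sum_{k\ge1}(p^2r)^{-k}=1/3$. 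Note also that $f_\star$ is a majorant, so the regularization does not shrink $f$: the danger is not lost entropy but excess mass, and that is exactly what $\EE[f_\star-f]$ quantifies.
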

In other words, to establish (\ref{LSI}) or (\ref{MLSI}) for a reversible Markov chain, it is enough to restrict  attention to $r-$regular observables. Such a costless regularization is of course interesting beyond its role in the present paper. A first version of it (for $\tmls$, and with an additional factor $\gamma=\frac{\max\pi}{\min\pi}$ in the regularization parameter $r$) was recently used  in \cite{TY} to obtain a sharp (\ref{MLSI}) for the \emph{switch chain} on regular bipartite graphs and as a consequence prove a long-standing conjecture about the mixing time of that chain (see \cite{TY'} and references therein).  Our proof below follows the same strategy, but optimizes it so as to remove the dependency of $r$ on $\gamma$. Note that this improvement  is crucial for our application to the ZRP.

We write $d(\cdot,\cdot)$ for the graph distance on $(\cX,E)$. Following \cite{TY}, we   fix  an arbitrary observable $f\colon\cX\to(0,\infty)$ and we define a new observable $f_\star\colon\cX\to(0,\infty)$ by
\begin{eqnarray*}
 \forall x\in\cX,\qquad f_\star(x) & := & \max_{z\in\cX}\, r^{-d(x,z)} f(z).
 \end{eqnarray*} 
It is immediate to check that $f_\star$ is $r-$regular and above $f$ (it is in fact the smallest such function, but we will not use this property here). The following propositions  show that the quantities $\cE(\sqrt{f},\sqrt{f})$, $\cE(f,\log f)$ and $\Ent(f)$ do not change much upon replacing $f$ by $f_\star$. 
\begin{proposition}[Comparison of Dirichlet forms]\label{pr:dirichlet} For $r$ as in (\ref{def:r}),  we have 
\begin{eqnarray*}
\cE(\sqrt{f_\star},\sqrt{f_\star})\ \le\ \frac 43 \cE(\sqrt{f},\sqrt{f}) & \textrm{ and } & \cE({f_\star},\log {f_\star})\ \le\ \frac 43 \cE(f,\log {f}).
\end{eqnarray*}
\end{proposition}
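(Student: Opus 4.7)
The plan is to compare the two Dirichlet forms edge-by-edge, exploiting the structural fact that $f_\star$ is a ``max-envelope'' of $f$ in a quantitative way. First I would establish the following structural lemma: for each $x$ with $z(x)\ne x$ (equivalently, $f_\star(x)>f(x)$), any neighbor $y(x)$ of $x$ lying on a geodesic from $x$ to $z(x)$ must satisfy $f_\star(y(x)) = r\, f_\star(x)$. Plugging $z(x)$ into the max defining $f_\star(y(x))$ yields $f_\star(y(x))\ge r^{-d(y(x),z(x))}f(z(x)) = r\cdot r^{-d(x,z(x))}f(z(x)) = r f_\star(x)$, while the reverse inequality is the $r$-regularity of $f_\star$ already established. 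Iterating the witness map $y$ traces a geodesic $x=v_0,v_1,\ldots,v_k=z(x)$ along which $f_\star$ is an exact geometric progression with ratio $r$, terminating at $v_k\in A^c:=\{f_\star=f\}$ where $f$ is forced to ``catch up''.

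Next I would split the edge set $E$ into $E_{\mathrm{reg}}$ (edges with both endpoints in $A^c$, on which $f_\star\equiv f$ and the two Dirichlet integrands coincide) and $E_{\mathrm{irr}}$ (the rest), and bound the $E_{\mathrm{irr}}$-contribution to $\cE(\sqrt{f_\star},\sqrt{f_\star})$ by charging it along witness geodesics. The key local computation is that along a witness path $v_0,\ldots,v_k$, the contribution of $f_\star$ at edge $(v_i,v_{i+1})$ equals $(\sqrt{r}-1)^2 r^i f_\star(v_0)\cdot \pi(v_i)Q(v_i,v_{i+1})$, a geometric progression with ratio $r$ dominated by its final term. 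At the terminal edge $(v_{k-1},v_k)$ one has $f(v_k)=r^k f_\star(v_0)$ while $f(v_{k-1})\le r^{k-1}f_\star(v_0)$, so the contribution of $f$ along that edge is at least $(\sqrt{r}-1)^2 r^{k-1} f_\star(v_0)\cdot \pi(v_{k-1})Q(v_{k-1},v_k)$, matching the largest $f_\star$-contribution on the path up to sparsity-controlled weight ratios. The sparsity definition $Q(x,y)\ge p\,Q(x)$ is used to compare $\pi(v_i)Q(v_i,v_{i+1})$ across consecutive steps via reversibility.

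Summing over all such paths reduces to a geometric series with ratio $r^{-1/2}=p/2\le 1/4$, whose sum $\sum_{i\ge 0}(p/2)^i = 1/(1-p/2)\le 4/3$ produces exactly the announced constant; this is the reason for the specific choice $r=4/p^2$. The main technical obstacle I anticipate is an \emph{overloading} issue: one must check that no single edge in $\cE(\sqrt{f},\sqrt{f})$ is charged by too many witness paths. The natural fix is that witness paths are pointwise monotone in $f_\star$ and end at the first exit from $A$, so reversibility together with the geometric decay of $f_\star$ backwards from any peak forces multiplicities to be absorbed inside the same geometric factor. The second inequality $\cE(f_\star,\log f_\star)\le\tfrac43\cE(f,\log f)$ would be proved by an identical decomposition, using the integrand $(a-b)\log(a/b)$ in place of $(\sqrt{a}-\sqrt{b})^2$ and invoking the elementary identity $(a-b)\log(a/b)=\cH(a/b)(\sqrt a-\sqrt b)^2$ to transport the geometric-series bookkeeping through.
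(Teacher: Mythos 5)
Your overall strategy --- compare the variation of $f_\star$ across an edge to the variation of $f$ across a nearby edge, with geometric decay in the distance, and let the sparsity parameter pay for the comparison --- is the same as the paper's, and your structural observations (the identity $f_\star(v_{i+1})=r\,f_\star(v_i)$ along a geodesic to the witness, termination of the iterated witness map in $\{f_\star=f\}$) are correct and appear in the paper in essentially the same form. However, two steps of your charging scheme have genuine gaps. First, your ``key local computation'' only covers edges that actually lie on a witness geodesic, but $E_{\mathrm{irr}}$ contains many edges that do not: an edge $(x,y)$ with both endpoints in $A$ whose witnesses lie in different directions is on neither geodesic, and your exact geometric-progression formula says nothing about it. The paper's Lemma~\ref{lm:local} is designed precisely to handle an \emph{arbitrary} edge $e=(x,y)$: it charges $e$ to the terminal edge $e'$ of the witness path of the larger endpoint and uses the defining inequality $f_\star(x)\ge r^{-d(x,x')}f(x')$, which is valid for every $x$ whether or not it lies on the path, to control the lower endpoint. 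You need this more general comparison, not just the on-path identity.

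Second, and more seriously, the overloading issue you flag is not resolved by the monotonicity/first-exit remark, and your constant bookkeeping shows it. A single terminal edge $e'$ can be charged by all edges $e$ at distance $j$ from it, and there can be as many as $\Delta^j$ of these, where $\Delta$ is the maximum degree of $(\cX,E)$. Per unit of distance you therefore lose three factors: $r^{-1}$ from the function values, up to $p^{-1}$ from the weight ratio $c(e)\le p^{-d(e,e')}c(e')$ (obtained via reversibility), and up to $\Delta$ from multiplicity. The whole point of the choice $r=4/p^2$ is that $\Delta\le p^{-1}$ (each outgoing rate is at least $pQ(x)$ and they sum to $Q(x)$), so the net per-step factor is $r^{-1}p^{-1}\Delta\le r^{-1}p^{-2}=1/4$, giving $\sum_k 4^{-k}=4/3$. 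Your claimed ratio $r^{-1/2}=p/2$ accounts for only one of the two factors of $p^{-1}$; if you insert the multiplicity $\Delta^j$ into your series it becomes $\sum_j(r^{-1}p^{-1}\Delta)^j$, and without the degree bound this does not close. The paper sidesteps the whole path-accounting by summing over \emph{all} pairs $(e,e')$ weighted by $c(e)r^{-d(e,e')}$ --- the constant $\kappa$ of (\ref{def:kappa}) --- which makes the multiplicity count explicit and is exactly where the second factor of $1/p$ in $r$ is consumed. Your argument can be repaired along these lines, but as written the announced constant $4/3$ is not established.
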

\begin{proposition}[Comparison of entropies]\label{pr:entropy} For $r$ as in (\ref{def:r}), we have 
$\Ent(f)\le 2\Ent(f_\star)$.
\end{proposition}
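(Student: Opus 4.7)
I would begin by homogeneity: rescale $f$ so that $\EE[f]=1$, so the goal becomes $\EE[f\log f]\le 2\bigl(\EE[f_\star\log f_\star]-b\log b\bigr)$ with $b:=\EE[f_\star]\ge 1$.

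The cornerstone estimate is the $L^1$-comparison $\EE[f_\star]\le \frac{4}{3}\EE[f]$, which is exactly what dictates the choice $r=4/p^2$. I would prove it by combining two consequences of reversibility. First, from $\pi(y)/\pi(x)=Q(x,y)/Q(y,x)\ge p$ for each edge $(x,y)\in E$, chaining along a shortest path gives $\pi(x)\le p^{-d(x,z)}\pi(z)$ for every $x,z\in\cX$. Second, the embedded discrete-time kernel $\tilde Q(x,y):=Q(x,y)/Q(x)$ satisfies $\tilde Q(x,y)\ge p$ on $E$, so $\tilde Q^k(z,x)\ge p^k$ whenever $d(x,z)=k$; combined with $\sum_x \tilde Q^k(z,x)=1$ this yields the ball-size bound $|\{x:d(x,z)=k\}|\le p^{-k}$. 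Multiplying the two estimates gives $\sum_{d(x,z)=k}\pi(x)\le p^{-2k}\pi(z)$, and since $p^2 r = 4$, summation against $r^{-k}$ produces $\sum_x \pi(x)r^{-d(x,z)}\le\sum_{k\ge 0} 4^{-k}\pi(z)=\frac{4}{3}\pi(z)$. The union bound $f_\star(x)\le\sum_z r^{-d(x,z)}f(z)$ and Fubini then deliver $\EE[f_\star]\le\frac{4}{3}\EE[f]$.

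With this in hand, I would compare the two entropies pointwise using $\phi(u):=u\log u$. On the level set $\{f\ge 1\}\subseteq\{f_\star\ge 1\}$ the map $\phi$ is nondecreasing and $f_\star\ge f$, so $\phi(f)\le\phi(f_\star)$; on the complement, $|\phi|\le 1/e$ controls the residual contribution. A symmetric splitting on $\{f_\star\ge 1\}$ leads to an estimate of the form $\EE[f\log f]\le\EE[f_\star\log f_\star]+O(1)$. Combined with $b\log b\le\frac{4}{3}\log\frac{4}{3}$ delivered by the $L^1$-comparison, this yields an \emph{additive} bound $\Ent(f)\le\Ent(f_\star)+C_0$ for an explicit universal constant $C_0$.

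The main obstacle is upgrading this additive bound to the sharp multiplicative statement with constant $2$. I would handle it by a dichotomy on the size of $\Ent(f_\star)$. When $\Ent(f_\star)\ge C_0$, the additive estimate already gives $\Ent(f)\le 2\Ent(f_\star)$ directly. In the delicate small-entropy regime, a Pinsker-type inequality applied to the probability density $f_\star/b$ forces $\EE[|f_\star-b|]\le\sqrt{2b\,\Ent(f_\star)}$; coupled with $f\le f_\star$ and $\EE[f]=1$, this pushes both $f$ and $b$ close to $1$ in $L^1$. One can then invoke the quadratic bound $\Ent(f)\le\Var(f)$, valid via $u\log u\le u(u-1)$, and compare $\Var(f)$ with $\Var(f_\star)+O((b-1)^2)$; the latter quantities are themselves controlled by $\Ent(f_\star)$ in this near-constant regime. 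Matching the sharp factor $2$ while keeping the losses compatible with the complementary regime is where I expect the bulk of the technical work to sit.
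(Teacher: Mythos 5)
Your first two steps are sound: the $L^1$-comparison $\EE[f_\star]\le\tfrac43\EE[f]$ is correct (and nicely proved via the two chained estimates $\pi(x)\le p^{-d(x,z)}\pi(z)$ and $|\{x:d(x,z)=k\}|\le p^{-k}$, which is essentially the same counting that the paper uses to bound $\kappa$), and the additive bound $\Ent(f)\le\Ent(f_\star)+C_0$ follows from it as you describe. The genuine gap is the small-entropy regime, and it is not merely technical: the only facts your argument retains about the pair $(f,f_\star)$ at that point are $f\le f_\star$ and $\EE[f_\star]\le\tfrac43\EE[f]$, and these two facts alone do \emph{not} imply $\Ent(f)\le 2\Ent(f_\star)$. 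Indeed, take $f_\star\equiv b$ constant with $b$ close to $\tfrac43$ and $f$ a two-valued function below $b$ with $\EE[f]=1$ (say $f=b$ on a set of measure $\approx 1/b$ and tiny elsewhere): then $\Ent(f_\star)=0$ while $\Ent(f)$ is of order $\log\tfrac43$. So no amount of work can close the dichotomy from these inputs; some additional structural information about $f_\star$ must be injected. The specific tools you propose also fail individually: a reverse Pinsker bound $\Var(f_\star)\lesssim\Ent(f_\star)$ requires an $L^\infty$ control on $f_\star$ that is unavailable (small entropy does not force small variance — a large value on a rare set can make $\Var/\Ent$ arbitrarily big), and $b-1=\EE[f_\star-f]$ has no a priori relation to $\Ent(f_\star)$.

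The missing ingredient is precisely what the paper supplies: a bound on the $L^1$-gap by the \emph{entropy of $f$ itself} rather than by its mean, namely $(3\log 6)\,\EE[f_\star-f]\le\Ent(f)$. This is proved by analyzing the argmax map $T(x)$ in the definition of $f_\star$ (showing $T$ is idempotent, so $T$ maps $A=\{f\ne f_\star\}$ onto $A^c$), writing $\EE[f_\star-f]=\EE[fh]$ for an explicit $h$ with $\EE[6^{3h}]\le 1$, and invoking the variational characterization of entropy. Combined with a variational-characterization bound in the other direction, $5\Ent(f)\le 6\Ent(f_\star)+(6\log 6)\EE[f_\star-f]$, the two inequalities close the loop and give the factor $2$. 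If you want to salvage your route, you would need to replace your $L^1$-comparison by an estimate of this self-referential type; the mean comparison alone is too weak.
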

Those propositions clearly imply Theorem \ref{th:reg}, and we henceforth focus on their proofs.
\subsection{Comparing Dirichlet forms (Proposition \ref{pr:dirichlet})}

The proof of Proposition \ref{pr:dirichlet} consists of three steps, of which only the last one uses the specific choice  for $r$ made at (\ref{def:r}). We  define the distance from an edge $e\in E$ to  $e'\in E$ in the obvious way: $d(e,e')=\ell-1$, where $\ell\ge 1$ is the minimum length of a path $(x_0,\ldots,x_\ell)$ with $(x_0,x_1)=e$ and $(x_{\ell-1},x_\ell)=e'$.
We first compare the  variations of $f$ and $f_\star$ across edges. 
\begin{lemma}\label{lm:local}For each $e=(x,y)\in E$ with $f_\star(x)\le f_\star(y)$, there is  $e'=(x',y')\in E$ so that 
\begin{eqnarray}
\label{goal}
r^{-d(e,e')} f(x') \ \le & f_\star(x) \ \le \  f_\star(y) & \le \ r^{-d(e,e')} f(y').
\end{eqnarray}
\end{lemma}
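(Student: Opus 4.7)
The plan is to take $e'$ to be the final edge on a shortest path from $y$ to a maximizer $z^\star$ of the definition of $f_\star(y)$, and then to use the hypothesis $f_\star(x)\le f_\star(y)$ to pin down the edge-distance $d(e,e')$ exactly. Let $k:=d(y,z^\star)$. The degenerate case $k=0$ means $f_\star(y)=f(y)$, so the trivial choice $e':=e$ makes both inequalities immediate; I assume henceforth $k\ge 1$ and fix a shortest path $y=u_0,u_1,\ldots,u_k=z^\star$.

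The key geometric observation is that the hypothesis $f_\star(x)\le f_\star(y)$ forces $d(x,z^\star)\ge k$, since otherwise $f_\star(x)\ge r^{-d(x,z^\star)}f(z^\star)$ would strictly exceed $f_\star(y)=r^{-k}f(z^\star)$. In particular $x$ does not lie on the shortest path, and the triangle inequality yields $d(x,u_i)\ge d(x,z^\star)-(k-i)\ge i$ for every $i$. Now take $e':=(u_{k-1},u_k)$. The upper bound $d(e,e')\le k$ is visible from the path $(x,y,u_1,\ldots,u_k)$ of length $k+1$; for the matching lower bound, recall that for distinct edges $d(e,e')=1+\min\{d(v_1,v_2):v_1\in e,\,v_2\in e'\}$, and here the four endpoint-distances are $d(y,u_{k-1})=k-1$, $d(y,u_k)=k$, $d(x,u_{k-1})\ge k-1$, $d(x,u_k)\ge k$, so the minimum equals $k-1$ and $d(e,e')=k$ exactly.

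It then remains to check the two inequalities at this value of $d(e,e')$: the right one $f_\star(y)\le r^{-k}f(u_k)$ holds as an equality by the very choice of $z^\star$, and the left one follows by combining $r^{-(k-1)}f(u_{k-1})\le f_\star(y)$ (immediate from the definition of $f_\star$) with the $r$-regularity of $f_\star$ across $e$, which provides $f_\star(x)\ge r^{-1}f_\star(y)$, giving $r^{-k}f(u_{k-1})\le r^{-1}f_\star(y)\le f_\star(x)$. The main step that requires care is the lower bound $d(e,e')\ge k$, where the rigidity supplied by the hypothesis $f_\star(x)\le f_\star(y)$ is essential: without it, $x$ could lie on the shortest path $u_0,\ldots,u_k$ and create a shortcut shrinking $d(e,e')$ below $k$, destroying the argument.
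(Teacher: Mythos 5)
Your construction is correct and is essentially the paper's: you take $e'$ to be the terminal edge of a geodesic from $y$ to a maximizer $z^\star$ realizing $f_\star(y)$, handle the degenerate case $f_\star(y)=f(y)$ by $e'=e$, and verify the two outer inequalities of (\ref{goal}) with $d(e,e')=k$. One remark: the formula you recall for the edge distance, $d(e,e')=1+\min\{d(v_1,v_2)\colon v_1\in e,\ v_2\in e'\}$, is not the paper's definition --- there the minimizing path must traverse $e=(x,y)$ and $e'=(x',y')$ in the prescribed orientations, which gives $d(e,e')=1+d(y,x')$ for $e\ne e'$, hence $d(e,e')=k$ here with no input from the hypothesis $f_\star(x)\le f_\star(y)$; your extra step showing $d(x,z^\star)\ge k$ (and the claim that the hypothesis is ``essential'' for the lower bound on $d(e,e')$) is therefore superfluous, the hypothesis being needed only for the middle inequality of (\ref{goal}). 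Nothing is actually broken, though, since your formula lower-bounds the paper's distance and your explicit path of length $k+1$ supplies the matching upper bound under either convention.
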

\begin{proof}If $f_\star(y)=f(y)$, then we simply choose $e'=e$ and (\ref{goal}) trivially holds. If on the contrary $f_\star(y)<f(y)$, then we take $e'=(x',y')$ where $y'$ is such that $f_\star(y)=r^{-d(y,y')}f(y')$, and where $x'$ is the penultimate vertex on a geodesic from $y$ to $y'$. We then have $d(e,e')=d(y,y')= 1+d(y,x')\ge d(x,x')$. Thus, the last inequality in (\ref{goal}) holds with equality, while the definition of $f_\star$ ensures that  $f_\star(x)\ge r^{-d(x,x')} f(x')\ge r^{-d(e,e')} f(x')$.
\end{proof}
We  now use this local comparison to estimate how the global quantities $\cE(\sqrt{f},\sqrt{f})$ and $\cE(f,\log {f})$  change  upon  replacing $f$ with $f_\star$.  For $(x,y)\in E$, we introduce the short-hand
\begin{eqnarray*}
c(x,y) & := & \pi(x)Q(x,y).
\end{eqnarray*}

\begin{lemma}We have 
$
\cE(\sqrt{f_\star},\sqrt{f_\star})  \le \kappa\,  \cE(\sqrt{f},\sqrt{f})$ and $
\cE(f_\star,\log f_\star)  \le  \kappa\, \cE(f,\log f)$, where
\begin{eqnarray}
\label{def:kappa}
\kappa & := & \max_{e'\in E}\left\{\frac{1}{c(e')}\sum_{e\in E}c(e)r^{-d(e,e')}\right\}.
\end{eqnarray}
\end{lemma}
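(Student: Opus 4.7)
The plan is a straightforward transport/exchange argument, driven entirely by the previous local comparison lemma and a re-indexing of the sum. For each (unordered) edge $e=\{x,y\}\in E$ I orient it so that $f_\star(x)\le f_\star(y)$, and apply the preceding lemma to produce an edge $\phi(e)=e'=(x',y')\in E$ with the sandwich
\[
r^{-d(e,e')}f(x')\ \le\ f_\star(x)\ \le\ f_\star(y)\ \le\ r^{-d(e,e')}f(y').
\]
In particular $f(y')\ge f(x')$, so $e'$ is automatically oriented the ``increasing'' way for $f$ as well.

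Next, I would convert this sandwich into the two pointwise edge-by-edge bounds that drive the proof. Taking square roots in the two outer inequalities gives
\[
\sqrt{f_\star(y)}-\sqrt{f_\star(x)}\ \le\ r^{-d(e,e')/2}\bigl(\sqrt{f(y')}-\sqrt{f(x')}\bigr),
\]
and both sides are non-negative, so squaring yields the local estimate for the Dirichlet form of $\sqrt{f}$. For the entropy-type form I use that the $r^{-d(e,e')}$ prefactor disappears when one takes logs, so
\[
\log f_\star(y)-\log f_\star(x)\ \le\ \log f(y')-\log f(x'),
\]
while the bare difference $f_\star(y)-f_\star(x)$ is bounded by $r^{-d(e,e')}(f(y')-f(x'))$. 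Multiplying gives
\[
(f_\star(y)-f_\star(x))\bigl(\log f_\star(y)-\log f_\star(x)\bigr)\ \le\ r^{-d(e,e')}(f(y')-f(x'))\bigl(\log f(y')-\log f(x')\bigr).
\]

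The final step is to sum these local bounds against $c(e)$ (using reversibility to collapse the two orientations of each unordered edge into a single contribution to the Dirichlet form) and then swap the order of summation, grouping edges $e$ by the common value $e'=\phi(e)$. In both cases this yields
\[
\cE(\cdot,\cdot)\ \le\ \sum_{e'\in E}\Phi(e')\sum_{e\in\phi^{-1}(e')}c(e)\,r^{-d(e,e')},
\]
where $\Phi(e')=(\sqrt{f(y')}-\sqrt{f(x')})^2$ or $(f(y')-f(x'))(\log f(y')-\log f(x'))$ respectively. The inner sum is trivially bounded by $\sum_{e\in E}c(e)r^{-d(e,e')}\le \kappa\, c(e')$ from the definition of $\kappa$, and what remains is exactly $\kappa$ times $\cE(\sqrt{f},\sqrt{f})$ or $\kappa\,\cE(f,\log f)$.

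There is no genuine obstacle here; the only point that needs care is consistency of the orientation conventions (so that all differences compared are non-negative and squaring/multiplication preserve inequalities), and the fact that $\phi$ need not be injective, which is precisely why the definition of $\kappa$ sums $c(e)r^{-d(e,e')}$ over \emph{all} edges $e$ rather than only over $\phi^{-1}(e')$. The specific choice $r=4/p^2$ is not used in this lemma; it will only enter later, to verify that the resulting $\kappa$ is bounded by the $4/3$ claimed in Proposition \ref{pr:dirichlet}.
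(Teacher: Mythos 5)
Your proof is correct and follows essentially the same route as the paper: apply the local comparison lemma edge by edge, sum the resulting pointwise bounds against $c(e)$, swap the order of summation, and absorb the inner sum into $\kappa$. The only difference is that you spell out the $\cE(f,\log f)$ case (splitting the product into the difference bound with prefactor $r^{-d(e,e')}$ and the log-difference bound where the prefactor cancels), which the paper dismisses with ``the second claim is obtained in exactly the same way.''
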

\begin{proof} Let $\nabla f(e) :=  f(y)-f(x)$ denote the discrete gradient of $f$ across an edge $e=(x,y)$. To each edge  $e\in E$, Lemma \ref{lm:local} associates a new edge $e'\in E$ such that
\begin{eqnarray*}
\left(\nabla \sqrt{f_\star}\right)_+ (e)
& \le & r^{-d(e,e')}\left(\nabla \sqrt{f}\right)_+ (e'),
\end{eqnarray*}
where $h_+=\max(h,0)$ denotes the positive part of $h$. Consequently, we have 
\begin{eqnarray*}
\cE(\sqrt{f_\star},\sqrt{f_\star}) & = & \sum_{e\in E}c(e)\left(\nabla \sqrt{f_\star}\right)^2_+(e) \\
& \le & \sum_{e,e'\in E}c(e)r^{-d(e,e')}\left(\nabla \sqrt{f}\right)^2_+(e')\\
& \le &  \kappa\sum_{e'\in E}c(e')\left(\nabla \sqrt{f}\right)^2_+(e') \ = \ \kappa \cE(\sqrt{f},\sqrt{f}),
\end{eqnarray*}
as desired. The second claim is obtained in exactly the same way. 
\end{proof}
To obtain Proposition \ref{pr:dirichlet}, it finally remains to estimate the constant $\kappa$ defined at (\ref{def:kappa}).
\begin{lemma}\label{lm:kappa}Choosing $r$ as in (\ref{def:r}) ensures that $\kappa\le 4/3$. 
\end{lemma}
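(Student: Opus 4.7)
}
My plan is to reduce the double sum defining $\kappa$ to a single sum over vertices of $\cX$, and then split the remaining combinatorics into (a) a chain-of-inequalities controlling $\pi(v)Q(v)$ by $c(e')/p^{d(v,x')+1}$, and (b) a degree bound on $(\cX,E)$ coming from sparsity.

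\emph{Step 1: reduction to a vertex sum.} With $e'=(x',y')$ fixed, I would first show by a direct inspection of the definition of $d(\cdot,\cdot)$ that for any edge $e=(u,v)\ne e'$ one has $d(e,e')=1+d(v,x')$, where $v$ is the head of $e$. Thus, using reversibility in the form $\sum_{u:(u,v)\in E}c((u,v))=\pi(v)Q(v)$, I would obtain
\begin{eqnarray*}
\sum_{e\in E}c(e)r^{-d(e,e')} & \le & c(e')+r^{-1}\sum_{v\in\cX}\pi(v)Q(v)\,r^{-d(v,x')}.
\end{eqnarray*}

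\emph{Step 2: chain inequality.} The key local estimate is that for every edge $(x,y)\in E$, the definition of $p$ together with reversibility yields both $\pi(x)Q(x)\le c((x,y))/p$ and $\pi(y)Q(y)\le c((x,y))/p$: indeed $Q(x,y)\ge pQ(x)$ directly gives the first, and $Q(y,x)\ge pQ(y)$ combined with $\pi(x)Q(x,y)=\pi(y)Q(y,x)$ gives the second. I would then fix a geodesic $x'=w_0,w_1,\ldots,w_s=v$ with $s=d(v,x')$ and chain these bounds: since $c((w_{i-1},w_i))\le\pi(w_{i-1})Q(w_{i-1})$ trivially and $\pi(w_i)Q(w_i)\le c((w_{i-1},w_i))/p$, an immediate induction gives
\begin{eqnarray*}
\pi(v)Q(v) & \le & \frac{\pi(x')Q(x')}{p^{s}} \ \le \ \frac{c(e')}{p^{s+1}}.
\end{eqnarray*}

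\emph{Step 3: degree bound and geometric sum.} The same definition of $p$ forces every vertex in the graph $(\cX,E)$ to have at most $1/p$ out-neighbours (since its out-transition probabilities, which sum to $Q(v)$, are each at least $pQ(v)$). Hence $|V_s|:=|\{v:d(v,x')=s\}|\le(1/p)^s$. Combining Steps 2 and 3,
\begin{eqnarray*}
\sum_v\pi(v)Q(v)\,r^{-d(v,x')} & \le & \frac{c(e')}{p}\sum_{s\ge 0}|V_s|(rp)^{-s} \ \le \ \frac{c(e')}{p}\sum_{s\ge 0}(rp^2)^{-s}.
\end{eqnarray*}
Plugging in $r=4/p^2$ collapses the geometric series to $\sum_{s\ge 0}4^{-s}=4/3$, and a direct computation then gives $r^{-1}\sum_v\pi(v)Q(v)r^{-d(v,x')}\le c(e')p/3\le c(e')/6$, using $p\le 1/2$. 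Adding the $c(e')$ from Step 1 yields $\kappa\le 7/6\le 4/3$.

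\emph{Main obstacle.} The delicate point is Step 2: the chain along a geodesic works only because the sparsity parameter $p$ controls \emph{both} $Q(x,y)/Q(x)$ \emph{and} $Q(x,y)/Q(y,x)$; without the second inequality one could not bound $\pi(y)Q(y)$ by $c(e)/p$, and the induction along the geodesic would fail. The choice $r=4/p^2$ is then dictated precisely by the requirement that the product of the degree factor $1/p$ and the decay factor $1/(rp)$ yield a summable series with ratio $\le 1/4$, which is what leaves enough room to absorb the $1/p$ loss in the bound on $\pi(v)Q(v)$.
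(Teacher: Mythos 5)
Your proof is correct and takes essentially the same route as the paper's: both rest on the one-step comparison $c(e)\le p^{-1}c(e')$ for adjacent edges (obtained from reversibility and the two-sided nature of the definition of $p$), chained along geodesics, combined with the degree bound $\Delta\le p^{-1}$ and the geometric series with ratio $(rp^2)^{-1}=1/4$. Your reorganization through the vertex sums $\pi(v)Q(v)$ even yields the marginally sharper constant $7/6$ in place of $4/3$, but the ingredients are identical.
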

\begin{proof}If $e,e'\in E$ satisfy $d(e,e')=1$, then we can write $e=(x,y)$ and $e'=(y,z)$. Using reversibility and the definition of $p$ at (\ref{def:p}), we have
\begin{eqnarray*}
c(e) & = & \pi(x)Q(x,y) \ = \  \pi(y)Q(y,x) \ \le \ \pi(y)Q(y)\ \le \ p^{-1}\pi(y)Q(y,z) \ = \ p^{-1}c(e').
\end{eqnarray*}
By an immediate induction, we deduce that $c(e)\le  p^{-d(e,e')}c(e')$ for all $e,e'\in E$. Thus,
\begin{eqnarray*}
\kappa & \le & \max_{e'\in E}\left\{\sum_{e\in E}(rp)^{-d(e,e')}\right\} \ \le \  \sum_{k=0}^\infty\Delta^k(rp)^{-k},
\end{eqnarray*}
where $\Delta$ denotes the maximum degree of the graph $(\cX,E)$. But $\Delta\le p^{-1}$,  because  from  every vertex, the outgoing transition probabilities  are at least $p$ and must sum to $1$. Thus, $\kappa\le \sum_k (p^2r)^{-k}=4/3$, thanks to  our choice for $r$.
\end{proof}

\subsection{Comparing entropies (Proposition \ref{pr:entropy})}
As in \cite{TY}, we use the variational characterization of entropy \cite{MR3185193}: for any $g\colon\cX\to(0,\infty)$,
\begin{eqnarray}
\label{varentropy}
\Ent(g) & = & \max\left\{\EE[gh]\colon h\in\dR^{\cX},  \EE[e^h]\le 1\right\}.
\end{eqnarray}
We start with an elementary lemma, which only uses the fact that $f_\star\ge f$.  
\begin{lemma}We have 
$
5\Ent(f)  \le  6\Ent(f_\star)+(6\log 6)\EE[f_\star-f]$.
\end{lemma}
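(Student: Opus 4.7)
The plan is to reduce the lemma to a purely pointwise estimate on $\phi(x):=x\log x$, and then absorb the "mean discrepancy" $-\EE f\log \EE f$ vs $-\EE f_\star \log \EE f_\star$ by tuning a free parameter $c>0$. Concretely, I would first establish the elementary pointwise inequality
\begin{eqnarray*}
5x\log(x/c)-6y\log(y/c) & \le & 6\log 6\,(y-x) \ +\ c/e,\qquad 0<x\le y,\ c>0.
\end{eqnarray*}
Since the left-hand side is convex in $x$ (for fixed $y,c$), its maximum over $x\in(0,y]$ is attained at an endpoint. The endpoint $x=y$ gives $-y\log(y/c)$, maximized in $y$ at $y=c/e$ with value $c/e$; the endpoint $x\to 0^+$ gives $-6y\log(6y/c)$, maximized at $y=c/(6e)$ with value $c/e$. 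The two values are made equal precisely by the coefficient $6\log 6$, which explains the constants in the lemma.

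Next, I would integrate the pointwise inequality against $\pi$ with $x=f$ and $y=f_\star$, yielding
\begin{eqnarray*}
5\EE[f\log(f/c)]-6\EE[f_\star\log(f_\star/c)] & \le & 6\log 6\,\EE[f_\star-f]\ +\ c/e.
\end{eqnarray*}
Using the identity $\EE[g\log(g/c)]=\Ent(g)+\EE[g]\log(\EE[g]/c)$, this rearranges to
\begin{eqnarray*}
5\Ent(f)-6\Ent(f_\star)-6\log 6\,\EE[f_\star-f] & \le & R(c),
\end{eqnarray*}
where $R(c):=c/e+6\EE[f_\star]\log(\EE[f_\star]/c)-5\EE[f]\log(\EE[f]/c)$. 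Minimizing $R$ over $c>0$, one finds $R'(c)=1/e-(6\EE[f_\star]-5\EE[f])/c$, which vanishes at
\begin{eqnarray*}
c^\ast & := & e\,\bigl(6\EE[f_\star]-5\EE[f]\bigr),
\end{eqnarray*}
a positive quantity because $\EE[f_\star]\ge\EE[f]>0$.

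The final task is to show $R(c^\ast)\le 0$. Writing $a=\EE[f]$ and $b=\EE[f_\star]$, a direct substitution gives
\begin{eqnarray*}
R(c^\ast) & = & 6\phi(b)-5\phi(a)+(5a-6b)\log(6b-5a).
\end{eqnarray*}
Setting $s=a$ and $t=b-a\ge 0$, this equals $g(s,t):=6(s+t)\log(s+t)-5s\log s-(s+6t)\log(s+6t)$; one checks $g(s,0)=0$ and $\partial_t g(s,t)=6\log\bigl((s+t)/(s+6t)\bigr)\le 0$, since $s+t\le s+6t$. Hence $g(s,t)\le 0$ for $t\ge 0$, finishing the proof. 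The only real subtlety is step one: one must verify that with the constants $5,6,6\log 6$ the two endpoint maxima in the pointwise inequality coincide, and that the resulting expression $R(c^\ast)$ genuinely has the right sign—this latter monotonicity in $t$ is the heart of the argument.
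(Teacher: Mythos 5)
Your proof is correct, and it takes a genuinely different route from the paper's. The paper normalizes $\EE[f]=1$ and invokes the variational (dual) characterization of entropy with the single test function $h=\log\bigl(\tfrac{1+5f}{6}\bigr)$, which satisfies $\EE[e^h]=1$; the lemma then follows in two lines from $\Ent(f_\star)\ge\EE[f_\star h]=\EE[fh]+\EE[(f_\star-f)h]$ together with the pointwise bounds $h\ge\tfrac56\log f$ and $h\ge\log\tfrac16$. You instead work entirely on the primal side: convexity of $x\mapsto 5x\log(x/c)$ reduces the pointwise claim to the two endpoint bounds, each of the form $-u\log(u/c)\le c/e$, and the paper's normalization is replaced by optimizing the free scale $c$, with the residual $R(c^\ast)=6b\log b-5a\log a-(6b-5a)\log(6b-5a)$ shown nonpositive by writing it as $g(s,t)$ with $g(s,0)=0$ and $\partial_t g\le 0$. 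I verified each step: the endpoint reduction on $(0,y]$ is legitimate (the limit as $x\to0^+$ exists and convexity applies), both endpoint maxima equal $c/e$, the identity $\EE[g\log(g/c)]=\Ent(g)+\EE[g]\log(\EE[g]/c)$ is correct, $c^\ast>0$ because $\EE[f_\star]\ge\EE[f]$, and the monotonicity in $t$ closes the argument. Like the paper's proof, yours uses only $f_\star\ge f$. What your route buys is that it avoids the entropy duality formula altogether and makes visible why the constants $5$, $6$ and $6\log 6$ fit together (they equalize the two endpoint maxima); what it costs is length, whereas the paper's test function --- essentially the dual object to your construction --- delivers the result almost immediately once guessed.
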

\begin{proof}Upon multiplying $f$ (hence also $f_\star$) by a constant, we may assume without loss of generality that $\EE[f]=1$. Then, the function $h:=\log\left(\frac{1+5f}6\right)$ satisfies $\EE[e^h]=1$ so (\ref{varentropy}) yields
\begin{eqnarray*}
\Ent(f_\star) & \ge  & \EE[f_\star h] \ = \ \EE[fh]+\EE[(f_\star-f)h].
\end{eqnarray*}
The claim now readily follows from the pointwise bounds $h\ge \frac 56{\log f}$ and $h\ge \log \frac 16$.
\end{proof}
In view of this lemma, Proposition \ref{pr:entropy} boils down to the following result.
\begin{lemma}With $r$ as in (\ref{def:r}), we have $(3\log 6)\EE[f_\star-f] \le \Ent(f)$.
\end{lemma}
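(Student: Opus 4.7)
The plan is to invoke the variational formula (\ref{varentropy}) with a test function tailored to capture the deficit $f_\star-f$, in the spirit of the $h=\log((1+5f)/6)$ choice used in the preceding lemma. By the degree-one homogeneity of both sides of the inequality, I first normalize $\EE[f]=1$. For each $x\in\cX$, select a maximizer $z_x$ of $z\mapsto r^{-d(x,z)}f(z)$, preferring $z_x=x$ whenever this is permissible. Grouping by the witness $z$ gives the dual representation
$$
\EE[f_\star-f]\ \le\ \sum_{z\in\cX} f(z)\,B(z),\qquad B(z):=\sum_{x\ne z:\,z_x=z}\pi(x)\,r^{-d(x,z)},
$$
and, setting $g(z):=B(z)/\pi(z)$, one has $\EE[f_\star-f]\le \EE[fg]$.

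The next step is the crucial geometric estimate $g(z)\le 1/3$, proved exactly as in Lemma \ref{lm:kappa}: reversibility iterated along a shortest $xz$-path gives $\pi(x)\le p^{-d(x,z)}\pi(z)$, the degree bound yields $|\{x:d(x,z)=k\}|\le\Delta^k\le p^{-k}$, and the identity $rp^2=4$ turns the resulting bound into the convergent geometric series
$$
g(z)\ \le\ \sum_{k\ge 1}(rp^2)^{-k}\ =\ \tfrac13.
$$
This reuses exactly the structure that makes the present choice of $r$ so convenient.

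For the variational step, I apply (\ref{varentropy}) with the test function $h(y):=\log\bigl((1+15\,g(y))/\EE[1+15g]\bigr)$, which is admissible thanks to $g\le 1/3$. The map $u\mapsto\log(1+15u)$ is concave on $[0,\tfrac13]$, taking the values $0$ at $u=0$ and $\log 6$ at $u=\tfrac13$; concavity thus yields the pointwise secant bound $\log(1+15u)\ge(3\log 6)\,u$, tight at both endpoints. Taking $\EE[f\cdot]$ and using $\EE[fg]\ge\EE[f_\star-f]$ one obtains
$$
\Ent(f)\ \ge\ (3\log 6)\,\EE[f_\star-f]\,-\,\log\bigl(1+15\,\EE[g]\bigr).
$$
The main obstacle is then to eliminate the residual $\log(1+15\EE[g])$. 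I expect the authors to resolve this by exploiting the refined identity $\EE[fg]=\EE[f_\star-f]+\EE[f\,\mathbf{1}_V]$ with $V:=\{f_\star>f\}$, which produces an extra positive term on the left-hand side, and the reversibility bound $\pi(z_x)f(z_x)\ge(pr)^{d(x,z_x)}\pi(x)f(x)\ge 4\,\pi(x)f(x)$ valid for each $x\in V$; together these should force $\log(1+15\EE[g])$ to be absorbed either into the entropy or into the left-hand side, yielding the clean inequality. This final absorption is the most subtle step and where the key new idea of the argument likely lies.
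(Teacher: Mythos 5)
Your argument tracks the paper's proof closely up to the decisive step: the paper also groups the deficit by the witness of the maximum, proves the same geometric bound ($\le 1/3 = \sum_{k\ge1}(rp^2)^{-k}$) via reversibility and the degree bound, and invokes the variational formula (\ref{varentropy}) with a test function built from this weight (your secant inequality $\log(1+15u)\ge (3\log 6)u$ on $[0,1/3]$ is the dual form of the paper's $6^{3h}\le 1+15h$). But the residual term $\log(1+15\,\EE[g])$ is a genuine gap, not a formality: $\EE[g]$ can be of order $\pi(A)/16$ with $A=\{f_\star>f\}$, so the residual can be comparable to both $\Ent(f)$ and the main term (e.g.\ on a two-point chain with $f=(2-\epsilon,\epsilon)$ your bound reads $\Ent(f)\ge 0.34-0.39$, which is vacuous), and it cannot be absorbed by any soft argument. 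Your closing speculation correctly identifies the missing ingredient --- the dropped term $\EE[f\mathbf{1}_A]$ in the identity $\EE[fg]=\EE[f_\star-f]+\EE[f\mathbf{1}_A]$ --- but you do not carry out the absorption, and the mechanism you sketch (a pointwise bound $\pi(z_x)f(z_x)\ge 4\pi(x)f(x)$) is not how it works.

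What the paper actually does is avoid multiplicative normalization altogether. It sets $h=-1$ on $A$ and $h(y)=\sum_{x\in A,\,T(x)=y}\frac{\pi(x)}{\pi(y)}r^{-d(x,y)}$ on $A^c$, so that the \emph{exact} identity $\EE[fh]=\EE[f_\star-f]$ holds (no term is discarded), and then verifies the admissibility condition $\EE[6^{3h}]\le 1$ directly: on $A^c$ one has $6^{3h}-1\le 15h$ and $\EE[h\mathbf{1}_{A^c}]=\sum_{x\in A}\pi(x)r^{-d(x,T(x))}\le \frac{1}{16}\pi(A)$ since $r\ge 16$, while on $A$ the contribution is $(6^{-3}-1)\pi(A)\le-\frac{15}{16}\pi(A)$; the negative part exactly pays for the positive excess. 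Two ingredients are needed to make this legitimate, and both are absent from your write-up: (i) the idempotence $T^2=T$ of the witness map, which guarantees $T(A)\subseteq A^c$ so that the two definitions of $h$ live on disjoint sets and no $x\in A$ sends mass to another point of $A$; and (ii) the quantitative use of $r\ge 16$ (not just $rp^2=4$) to get the factor $\frac{1}{16}$ that balances against $1-6^{-3}=\frac{215}{216}\ge\frac{15}{16}$. Without these, the proof is not complete.
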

\begin{proof}For each $x\in\cX$, let us choose a state $T(x)\in\cX$ that achieves the maximum in the definition of $f_\star(x)$ (breaking ties arbitrarily), i.e.
\begin{eqnarray*}
f_\star(x) & = & r^{-d(x,T(x))}f\left(T(x)\right).
\end{eqnarray*}
Thus, $T(x)=x$ if and only if $f_\star(x)=f(x)$. Note that if we had $f(T(x))<f_\star(T(x))$, then 
\begin{eqnarray*}
f_\star(x) & < & r^{-d(x,T(x))}f_\star\left(T(x)\right) \ = \  r^{-d(x,T(x))}r^{-d(T(x),T^2(x))}f\left(T^2(x)\right)\ \le \  r^{-d(x,T^2(x))}f\left(T^2(x)\right),
\end{eqnarray*}
which would contradict the maximal definition of $f_\star(x)$. Thus, we must in fact have $f(T(x))=f_\star(T(x))$, or equivalently, $T^2(x)=T(x)$. This shows that  $T(A)=A^c$, where
\begin{eqnarray*}
A & := & \{x\in\cX\colon T(x)\ne x\} \ = \ \{x\in\cX\colon f(x)\ne f_\star(x)\}.
\end{eqnarray*}
Now, coming back to our goal, let us write
\begin{eqnarray*}
\EE[f_\star-f] & = & \sum_{x\in A}\pi(x)(f_\star(x)-f(x))\\
& = & \sum_{x\in A}\pi(x)f(T(x))r^{-d(x,T(x))}-\sum_{x\in A}\pi(x)f(x)\\
& = & \sum_{y\in T(A)}\pi(y)f(y)h(y)-\sum_{x\in A}\pi(x)f(x),
\end{eqnarray*}
where for $y\in T(A)$, we have introduced the short-hand
\begin{eqnarray*}
h(y) & := & \sum_{x\in T^{-1}(\{y\})}\frac{\pi(x)}{\pi(y)}r^{-d(x,y)}.
\end{eqnarray*}
Recalling that $T(A)=A^c$, we may set $h=-1$ on $A$ to rewrite the previous computation as  
\begin{eqnarray*}
 \EE[f_\star-f]  & = & \EE[fh].
\end{eqnarray*}
In light of the variational characterization of entropy (\ref{varentropy}) (with $3h\log 6$ instead of $h$), it remains to check that $\EE[6^{3h}]  \le  1$. As in the proof of Lemma \ref{lm:kappa}, our choice  $r=4p^{-2}$ easily ensures that $h\le \sum_{k=1}(p^2r)^{-k}= 1/3$, so that $6^{3h}\le 1+15h$ on $A^c$. Recalling the definition of $h$, we deduce that
 \begin{eqnarray*}
 \EE\left[(6^{3h}-1){\bf 1}_{A^c}\right] & \le & 15\EE[h{\bf 1}_{A^c}] \ = \ 
  15\sum_{x\in A}\pi(x)r^{-d(x,T(x))} \ \le \ \frac{15}{16}\pi(A),
 \end{eqnarray*}
because $r\ge 16$. On the other hand, $\EE\left[(6^{3h}-1){\bf 1}_{A}\right]=(6^{-3}-1)\pi(A)\le-\frac{15}{16}\pi(A)$.
\end{proof}

\bibliographystyle{plain}
\bibliography{draft}
\section*{Author affiliations}
Justin Salez: CEREMADE, Université Paris-Dauphine \& PSL, Place du Maréchal de Lattre de Tassigny, F-75775 Paris Cedex 16, FRANCE. 
\texttt{\small e-mail:  justin.salez@dauphine.psl.eu}\\

\noindent Konstantin Tikhomirov: School of Mathematics, Georgia Institute of Technology, 686 Cherry street, Atlanta, GA 30332, USA \& Department of Mathematical Sciences, Carnegie Mellon University, Wean Hall 6113, Pittsburgh, PA 15213, USA. \texttt{\small e-mail:  ktikhomi@andrew.cmu.edu}\\

\noindent Pierre Youssef: 
Division of Science, NYU Abu Dhabi, Saadiyat Island, Abu Dhabi, UAE \& Courant
Institute of Mathematical Sciences, New York University, 251 Mercer st, New York,
NY 10012, USA. 
\texttt{\small e-mail:  yp27@nyu.edu}
\end{document}